\documentclass[10pt]{article}

\usepackage{amstext}
\usepackage{amssymb}
\usepackage{amsmath}
\usepackage{amsthm}
\usepackage[dvips]{graphicx}        
\graphicspath{{./pict/}}            

\setlength{\topmargin}{-1.5cm} 
\setlength{\textheight}{23.3cm}
\setlength{\oddsidemargin}{0.0cm}
\setlength{\evensidemargin}{0.0cm} 
\setlength{\textwidth}{16.2cm}

\setcounter{page}{1} \setcounter{MaxMatrixCols}{20}

\theoremstyle{plain}
\newtheorem{theorem}{Theorem}[section]
\newtheorem{lemma}[theorem]{Lemma}

\newtheorem{corol}[theorem]{Corollary}

\theoremstyle{definition}
\newtheorem{definition}[theorem]{Definition}

\newtheorem{remark}[theorem]{Remark}

\hfuzz6pt

\begin{document}
\vspace{\baselineskip}

\vspace{\baselineskip} \thispagestyle{empty}

\title{Hurwitz 
rational functions}

\author{Yury Barkovsky
   \\
 \small Department of calculus mathematics and mathematical physics,\\ \small Faculty of Mathematics, Mechanics \& Computer Science,\\
 \small  Southern Federal University, \\
 \small Milchakova str. 8a, 344090, Rostov-on-Don, Russia
 \and  Mikhail Tyaglov\thanks{The work of M.T. was supported by the
Sofja Kovalevskaja Research Prize of Alexander von Humboldt Foundation.
Email: {\tt tyaglov@math.tu-berlin.de}}\\
\small Technische Universit\"at  Berlin, Institut f\"ur Mathematik,\\
\small MA 4-5, Strasse des 17. Juni 136, 10623, Berlin, Germany}

\date{\small \today}


\maketitle

\vspace{10mm}

\begin{abstract}
A generalization of Hurwitz stable polynomials to real rational functions is considered. We establishe
an analogue of the Hurwitz stability criterion for rational functions and introduce a new type
of determinants that can be treated as a generalization of the Hurwitz determinants.
\end{abstract}


%

%



\section*{Introduction}

\hspace{4mm} It is well known that the problem of stability of a linear difference or differential
system with constant coefficients reduces to the question of locating the zeroes of its
characteristic polynomial in the left half-plane of the complex plane. One of the most famous
results from stability theory is the Hurwitz theorem, which expresses stability of a real polynomial
in terms of its coefficients~\cite{Hurwitz,{KreinNaimark},{Gantmakher}} (see also~\cite{Barkovsky.2,{Tyaglov.general.Hurw}}).
Namely, the Hurwitz theorem states that a real polynomial has all its zeroes in the open
left half-plane if and only if some determinants constructed with the coefficients of the polynomial
are positive (see Theorem~\ref{Theorem.Hurwitz.stable.Hurwitz.matrix.criteria}). Those determinants are
now called the Hurwitz determinants due to Adolf Hurwitz who introduced them in~\cite{Hurwitz}.

In the present work we investigate a class of real rational functions satisfying the Hurwitz conditions:
the Hurwitz determinants constructed with the coefficients of the Laurent series at $\infty$ of a real rational function
are positive up to the order $n$, where $n$ is the sum of degrees of the numerator and denominator of the rational
function. This is a generalization of the class of real Hurwitz stable polynomials. It turns out that such class
of rational functions is characterized by location of poles and zeroes: all zeroes lie in the open left
half-plane of the complex plane while all poles lie in the open right half-plane (Theorem~\ref{Hurvitz.analog}).
Finally, we express the Hurwitz determinants of real rational functions in terms of the coefficients of the numerator
and denominator (Lemma~\ref{lem.rat.1}). Thereby we introduce a new type of determinants and describe the class of real rational functions
satisfying Hurwitz conditions in terms of coefficients of their numerator and denominator (Theorem~\ref{Hurvitz.analog.2}). As well
as in case of polynomials this class of rational functions is characterized by positivity of those determinants. Some simple
properties of the new type of determinants that can be treated as a generalization of the Hurwitz determinants are considered.


\setcounter{equation}{0}

\section{Hurwitz polynomials}\label{section:Hurwitz.poly}

\hspace{4mm} Consider a real polynomial
\begin{equation}\label{main.polynomial}
p(z)\stackrel{def}{=}a_0z^n+a_1z^{n-1}+\dots+a_n,\qquad a_1,\dots,a_n\in\mathbb
R,\ a_0>0.
\end{equation}
Throughout the paper we use the following notation
\begin{equation}\label{floor.poly.degree}
l\stackrel{def}{=}\left[\dfrac n2\right],
\end{equation}
where $n=\deg p$, and $[\rho]$ denotes the largest integer not
exceeding $\rho$.

The polynomial $p$ can always be represented as follows
\begin{equation*}\label{app.poly.odd.even}
p(z)=p_0(z^2)+zp_1(z^2),
\end{equation*}
where $p_0$ and $p_1$ are the even and odd parts of the polynomial, respectively.
Introduce the following function:
\begin{equation}\label{assoc.function}
\Phi(u)\stackrel{def}{=}\displaystyle\frac {p_1(u)}{p_0(u)}.
\end{equation}
\begin{definition}\label{def.associated.function}
We call $\Phi$ the \textit{function  associated with the
polynomial}~$p$.
\end{definition}

Associate with the polynomial $p$ the following determinants:
\begin{equation}\label{delta}
\Delta_{j}(p)\stackrel{def}{=}
\begin{vmatrix}
a_1&a_3&a_5&a_7&\dots&a_{2j-1}\\
a_0&a_2&a_4&a_6&\dots&a_{2j-2}\\
0  &a_1&a_3&a_5&\dots&a_{2j-3}\\
0  &a_0&a_2&a_4&\dots&a_{2j-4}\\
\vdots&\vdots&\vdots&\vdots&\ddots&\vdots\\
0  &0  &0  &0  &\dots&a_{j}
\end{vmatrix},\quad j=1,\ldots,n,
\end{equation}
where we set $a_i\equiv0$ for $i>n$.
\begin{definition}\label{def.Hurwitz.dets}
The determinants $\Delta_{j}(p)$, $j=1,\ldots,n$, are called the
\textit{Hurwitz determinants} or the \textit{Hurwitz minors} of
the polynomial~$p$.
\end{definition}

Suppose that $\deg p_0\geqslant\deg p_1$ and expand the function
$\Phi$ into its Laurent series at~$\infty$:
\begin{equation}\label{app.assoc.function.series}
\Phi(u)=\dfrac{p_1(u)}{p_0(u)}=s_{-1}+\frac{s_0}u+\frac{s_1}{u^2}+\frac{s_2}{u^3}+\frac{s_3}{u^4}+\dots,
\end{equation}
where $s_{-1}\neq0$ if $\deg p_0=\deg p_1$, and $s_{-1}=0$ if
$\deg p_0>\deg p_1$.

For a given infinite sequence $(s_j)_{j=0}^{\infty}$, consider the determinants
\begin{equation}\label{Hankel.determinants.1}
D_j(\Phi)\stackrel{def}{=}
\begin{vmatrix}
    s_0 &s_1 &s_2 &\dots &s_{j-1}\\
    s_1 &s_2 &s_3 &\dots &s_j\\
    \vdots&\vdots&\vdots&\ddots&\vdots\\
    s_{j-1} &s_j &s_{j+1} &\dots &s_{2j-2}
\end{vmatrix},\quad j=1,2,3,\dots,
\end{equation}
These determinants are referred to as the~\textit{Hankel minors} or~\textit{Hankel determinants}.

Together with the determinants~\eqref{Hankel.determinants.1} we consider one more sequence of Hankel determinants.
\begin{equation}\label{Hankel.determinants.2}
\widehat{D}_j(\Phi)\stackrel{def}{=}
\begin{vmatrix}
    s_1 &s_2 &s_3 &\dots &s_{j}\\
    s_2 &s_3 &s_4 &\dots &s_{j+1}\\
    \vdots&\vdots&\vdots&\ddots&\vdots\\
    s_{j} & s_{j+1} & s_{j+2} & \dots &s_{2j-1}
\end{vmatrix},\quad j=1,2,3,\dots
\end{equation}

It is very well known~\cite{Hurwitz,Gantmakher} (see also~\cite{Tyaglov.general.Hurw}) that
there are relations between the determinants $D_j(\Phi)$, $\widehat{D}_j(\Phi)$
and the Hurwitz minors~$\Delta_j(p)$:
\begin{itemize}
\item[1)]
If $n=2l$, then
\begin{equation}\label{Formulae.Gurwitz.1}
\begin{array}{l}
\Delta_{2j-1}(p)=a_0^{2j-1}D_j(\Phi),\\
\\
\Delta_{2j}(p)=(-1)^ja_0^{2j}\widehat{D}_j(\Phi),
\end{array}\qquad
j=1,2,\dots,l;\qquad\quad
\end{equation}
\item[2)]
If $n=2l+1$, then
\begin{equation}\label{Formulae.Gurwitz.2}
\begin{array}{l}
\Delta_{2j}(p)={\left(\dfrac{a_0}{s_{-1}}\right)}^{2j}D_j(\Phi),\\
\Delta_{2j+1}(p)=(-1)^j{\left(\dfrac{a_0}{s_{-1}}\right)}^{2j+1}\widehat{D}_j(\Phi),
\end{array}\quad j=0,1,\dots,l;
\end{equation}
where $\widehat{D}_0(\Phi)\equiv1$.
\end{itemize}
Here $l$ is defined in~\eqref{floor.poly.degree}.

It is also well known~\cite{Hurwitz,Gantmakher} (see also~\cite{Holtz_Tyaglov}) that the number of poles of
the function $\Phi$ equals the order of the last non-zero minor $D_j(\Phi)$. Since the function
$\Phi$ has at most $l$ poles, we have
\begin{equation}\label{Theorem.Hurwitz.stable.poly.and.sign.regularity.condition.3}
D_j(\Phi)=\widehat{D}_j(\Phi)=0,\quad j>l.
\end{equation}
Thus, in the sequel, we deal only with the determinants $D_j(\Phi)$,
$\widehat{D}_j(\Phi)$ of order at most $l$.

\begin{definition}\label{def.Hurwitz.stable.poly}
The polynomial $p$ defined in~\eqref{main.polynomial} is called
\textit{Hurwitz} or \textit{Hurwitz stable} if all its zeroes lie
in the \textit{open} left half-plane of the complex plane.
\end{definition}

The following criterion of Hurwitz stability of a real polynomial was (implicitly) established
in~\cite{Hurwitz} (see also~\cite{Gantmakher,{Barkovsky.2},{Tyaglov.general.Hurw}}).
\begin{theorem}\label{Theorem.Hurwitz.stable.poly.and.sign.regularity}
Let a real polynomial $p$ be defined by~\eqref{main.polynomial}.
The following conditions are equivalent:
\begin{itemize}
\item[1)] the polynomial $p$ is Hurwitz stable;
\item[2)] the following hold
\begin{equation}\label{Theorem.Hurwitz.stable.poly.and.sign.regularity.condition.0}
\begin{split}
&s_{-1}>0\quad\text{for}\quad n=2l+1,\\
&D_j(\Phi)>0, \qquad j=1,\ldots,l,\\
&(-1)^{j}\widehat{D}_{j}(\Phi)>0, \qquad j=1,\ldots,l,
\end{split}
\end{equation}
where $l=\left[\dfrac n2\right]$.
\end{itemize}
\end{theorem}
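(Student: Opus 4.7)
} My plan is to reduce the claimed equivalence to the classical coefficient form of the Hurwitz stability criterion---namely, that $p$ with $a_0>0$ is Hurwitz stable if and only if every Hurwitz minor $\Delta_j(p) > 0$ for $j = 1,\ldots,n$---and then to translate the positivity of the Hurwitz minors into the conditions \reff{Theorem.Hurwitz.stable.poly.and.sign.regularity.condition.0} via the identities \reff{Formulae.Gurwitz.1} and \reff{Formulae.Gurwitz.2}. I would take the classical $\Delta_j$-form as given from \cite{Hurwitz,Gantmakher,Barkovsky.2,Tyaglov.general.Hurw}; with that in hand, the rest of the argument is a careful sign book-keeping that the formulas \reff{Formulae.Gurwitz.1}--\reff{Formulae.Gurwitz.2} were evidently set up to carry out.

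In the case $n = 2l$, the identities \reff{Formulae.Gurwitz.1} read $\Delta_{2j-1}(p) = a_0^{2j-1} D_j(\Phi)$ and $\Delta_{2j}(p) = (-1)^j a_0^{2j} \widehat D_j(\Phi)$. Since $a_0 > 0$, the simultaneous positivity of all $\Delta_j(p)$ for $j = 1,\ldots,n$ is equivalent to $D_j(\Phi) > 0$ and $(-1)^j \widehat D_j(\Phi) > 0$ for $j = 1, \ldots, l$, which is precisely condition (2). Thus the equivalence (1)$\iff$(2) is immediate in the even case.

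In the case $n = 2l + 1$, a direct comparison of leading coefficients in the even/odd decomposition together with the expansion \reff{app.assoc.function.series} gives $s_{-1} = a_0/a_1$, so specializing \reff{Formulae.Gurwitz.2} at $j = 0$ yields $\Delta_1(p) = a_0/s_{-1}$; since $a_0 > 0$, this forces $\Delta_1(p) > 0 \iff s_{-1} > 0$. This is the origin of the extra sign condition on $s_{-1}$ in \reff{Theorem.Hurwitz.stable.poly.and.sign.regularity.condition.0}. Once $s_{-1} > 0$ is in place, the prefactors $(a_0/s_{-1})^{2j}$ and $(a_0/s_{-1})^{2j+1}$ appearing in the remaining identities \reff{Formulae.Gurwitz.2} are all positive, so positivity of $\Delta_{2j}(p)$ and $\Delta_{2j+1}(p)$ is equivalent to $D_j(\Phi) > 0$ and $(-1)^j \widehat D_j(\Phi) > 0$ for $j = 1,\ldots,l$. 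The vanishing relation \reff{Theorem.Hurwitz.stable.poly.and.sign.regularity.condition.3} guarantees that no determinants of higher index contribute.

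The only delicate point is thus the odd case: one must observe that $s_{-1} > 0$ is a genuinely additional condition rather than a consequence of the determinantal inequalities, and this arises precisely because the exponent of $a_0/s_{-1}$ attached to $\widehat D_0 \equiv 1$ is odd. Beyond this sign subtlety I do not anticipate any serious obstacle, since the identities \reff{Formulae.Gurwitz.1}--\reff{Formulae.Gurwitz.2} together with the classical Hurwitz minor criterion carry the entire content of the proof.
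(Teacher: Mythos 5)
The paper does not actually prove Theorem~\ref{Theorem.Hurwitz.stable.poly.and.sign.regularity}: it is stated as a known result, ``(implicitly) established'' in \cite{Hurwitz} and attributed to \cite{Gantmakher,Barkovsky.2,Tyaglov.general.Hurw}, and the paper then uses it, together with \reff{Formulae.Gurwitz.1}--\reff{Formulae.Gurwitz.2}, to \emph{derive} the classical minor criterion (Theorem~\ref{Theorem.Hurwitz.stable.Hurwitz.matrix.criteria}). Your proposal runs this implication in exactly the opposite direction: you take Theorem~\ref{Theorem.Hurwitz.stable.Hurwitz.matrix.criteria} as the external black box and recover Theorem~\ref{Theorem.Hurwitz.stable.poly.and.sign.regularity} from the same identities. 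Your sign book-keeping is correct in both parities: for $n=2l$ the prefactors $a_0^{2j-1},a_0^{2j}$ are positive and the translation is immediate; for $n=2l+1$ the $j=0$ case of \reff{Formulae.Gurwitz.2} indeed gives $\Delta_1(p)=a_1=a_0/s_{-1}$, so $s_{-1}>0$ is equivalent to $\Delta_1(p)>0$, the even powers $(a_0/s_{-1})^{2j}$ are positive unconditionally, and the odd powers become positive once $s_{-1}>0$ is secured. (You should at least remark on the degenerate case $a_1=0$, where $\deg p_0<\deg p_1$ and the expansion \reff{app.assoc.function.series} is not of the assumed form; there $p$ is not stable by Stodola's theorem and condition 2) fails as well, so the equivalence survives, but the formulas as written do not literally apply.)

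The one substantive caveat is logical rather than computational. Within this paper's architecture your argument is circular, since Theorem~\ref{Theorem.Hurwitz.stable.Hurwitz.matrix.criteria} is presented as a \emph{consequence} of the statement you are proving. Your appeal to an independent external proof of the classical Hurwitz criterion rescues this, and such proofs exist (Routh's algorithm, Cauchy indices, Hermite--Biehler), but you should say explicitly which one you are invoking, because the actual mathematical content of Theorem~\ref{Theorem.Hurwitz.stable.poly.and.sign.regularity} --- the link between the location of the zeroes of $p$ and the Hankel minors of the associated function $\Phi$ --- is precisely what the cited sources establish directly (via the Cauchy index of $\Phi$ and the Frobenius--Hermite theory of Hankel forms), and what neither your translation argument nor the paper's citation-only treatment supplies. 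As a reduction of one known criterion to the other, your proposal is sound; as a self-contained proof it defers the entire content to the reference, just as the paper does, only to a different reference.
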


This theorem together with fornl\ae~\eqref{Formulae.Gurwitz.1}--\eqref{Formulae.Gurwitz.2} imply the following
theorem which is very well known as the Hurwitz criterion of polynomial stability.
\begin{theorem}[Hurwutz~\cite{Hurwitz}]\label{Theorem.Hurwitz.stable.Hurwitz.matrix.criteria}
A real polynomial $p$ of degree $n$ as
in~\eqref{main.polynomial} is Hurwitz stable if and only if all Hurwitz determinants $\Delta_j(p)$ are positive:
\begin{equation}\label{Hurvitz.det.noneq}
\Delta_1(p)>0,\ \Delta_2(p)>0,\dots,\ \Delta_n(p)>0;
\end{equation}
\end{theorem}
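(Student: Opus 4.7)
The plan is to derive Theorem~\ref{Theorem.Hurwitz.stable.Hurwitz.matrix.criteria} as a direct corollary of Theorem~\ref{Theorem.Hurwitz.stable.poly.and.sign.regularity} by substituting the identities \eqref{Formulae.Gurwitz.1}--\eqref{Formulae.Gurwitz.2} which express each Hurwitz minor $\Delta_j(p)$ in terms of the Hankel minors $D_j(\Phi)$ and $\widehat D_j(\Phi)$. The argument splits naturally into the two parity cases $n=2l$ and $n=2l+1$, and in both cases reduces to algebraic bookkeeping rather than any new analysis.

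I would begin with the even case $n=2l$. Formulas~\eqref{Formulae.Gurwitz.1} give $\Delta_{2j-1}(p)=a_0^{2j-1}D_j(\Phi)$ and $\Delta_{2j}(p)=(-1)^{j}a_0^{2j}\widehat D_j(\Phi)$ for $j=1,\ldots,l$. Since $a_0>0$ by assumption, the system~\eqref{Hurvitz.det.noneq} is equivalent to the inequalities $D_j(\Phi)>0$ and $(-1)^j\widehat D_j(\Phi)>0$ for $j=1,\ldots,l$. By Theorem~\ref{Theorem.Hurwitz.stable.poly.and.sign.regularity} this is exactly the criterion for $p$ to be Hurwitz stable when $n$ is even.

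Next I would handle the odd case $n=2l+1$, in which formulas~\eqref{Formulae.Gurwitz.2} read
$\Delta_{2j}(p)=(a_0/s_{-1})^{2j}D_j(\Phi)$ and $\Delta_{2j+1}(p)=(-1)^{j}(a_0/s_{-1})^{2j+1}\widehat D_j(\Phi)$ for $j=0,1,\dots,l$. The key preliminary observation is that, setting $j=0$ in the second identity, one obtains $\Delta_1(p)=a_0/s_{-1}$; combined with $a_0>0$, this shows that $\Delta_1(p)>0$ is equivalent to $s_{-1}>0$. Under that sign condition all factors $(a_0/s_{-1})^{k}$ are positive, so the remaining identities translate $\Delta_{2j}(p)>0$ into $D_j(\Phi)>0$ and $\Delta_{2j+1}(p)>0$ into $(-1)^j\widehat D_j(\Phi)>0$, exactly reproducing condition~\eqref{Theorem.Hurwitz.stable.poly.and.sign.regularity.condition.0} and hence, via Theorem~\ref{Theorem.Hurwitz.stable.poly.and.sign.regularity}, the Hurwitz stability of $p$.

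There is essentially no hard step: all the analytic content — the link between the zero location of $p$ and the signs of the Hankel minors of the associated function $\Phi$ — has already been packaged into Theorem~\ref{Theorem.Hurwitz.stable.poly.and.sign.regularity}, and the present statement is obtained by a purely algebraic translation. The only place requiring care is tracking the alternating signs $(-1)^j$ together with the positive factors $a_0^k$ and $s_{-1}^{-k}$ in the odd case, and checking that the equivalences proceed in both directions without any loss of information.
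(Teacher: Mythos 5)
Your proposal is correct and takes essentially the same approach as the paper: the paper derives Theorem~\ref{Theorem.Hurwitz.stable.Hurwitz.matrix.criteria} as an immediate consequence of Theorem~\ref{Theorem.Hurwitz.stable.poly.and.sign.regularity} combined with the formul\ae~\eqref{Formulae.Gurwitz.1}--\eqref{Formulae.Gurwitz.2}, which is exactly the parity-split sign bookkeeping you carry out (including the observation that $\Delta_1(p)=a_0/s_{-1}$ handles the condition $s_{-1}>0$ in the odd case); the paper simply leaves these details unwritten.
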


\setcounter{equation}{0}

\section{Hurwitz rational function}\label{s:stabe.rat.functions}

\hspace{4mm} Consider a rational function
\begin{equation}\label{rat.func.1}
R(z)\stackrel{def}{=}\dfrac{h(z)}{g(z)}=t_0z^{r-m}+t_1z^{r-m-1}+t_2z^{r-m-2}+\ldots,
\end{equation}
where $h$ and $g$ are real polynomials
\begin{equation}\label{rat.func.2}
\begin{split}
&h(z)\stackrel{def}{=} b_0z^r+b_1z^{r-1}+\ldots+b_{r-1}z+b_r,\qquad
b_0,b_1,\dots,b_r\in\mathbb R,\ b_0>0;\\
&g(z)\stackrel{def}{=} c_0z^m+c_1z^{m-1}+\ldots+c_{m-1}z+c_m,\qquad
c_0,c_1,\dots,c_m\in\mathbb R,\ c_0>0.
\end{split}
\end{equation}
Here $r,m\in\mathbb{N}\bigcup\{0\}$, $r+m>0$. Assume that
the polynomials $p$ and $q$ are coprime. The number $n=r+m$ is called
the \textit{order} of the function $R$.
\begin{definition}
The real rational function $R$ is called \textit{Hurwitz} rational function
if both polynomials $h(z)$ and $g(-z)$ are Hurwitz stable\footnote{Or one of them
is Hurwitz stable while the second one is a constant.}.
\end{definition}

It turns out that one can establish a criterion for real rational functions to be Hurwitz
similar to the Hurwitz criterion of polynomial stability
\begin{theorem}\label{Hurvitz.analog}
A real rational function~$R(z)$ of the form~\eqref{rat.func.1}--\eqref{rat.func.2} is Hurwitz
if and only if the leading principal minors $\Delta_k(R)$ of the infinite Hurwitz matrix
\begin{equation}\label{Hurwitz.matrix}
\mathcal{H}(R)\stackrel{def}{=}
\begin{pmatrix}
t_1&t_3&t_5&t_7&t_9&\dots\\
t_0&t_2&t_4&t_6&t_8&\dots\\
0  &t_1&t_3&t_5&t_7&\dots\\
0  &t_0&t_2&t_4&t_6&\dots\\
0  &0  &t_1&t_3&t_5&\dots\\
0  &0  &t_0&t_2&t_4&\dots\\
\vdots&\vdots&\vdots&\vdots&\vdots&\ddots\\
\end{pmatrix}
\end{equation}
are positive up to the order $n$ $($inclusive$)$:
\begin{equation*}
\Delta_1(R)>0,\ \Delta_2(R)>0,\dots,\ \Delta_n(R)>0.
\end{equation*}
\end{theorem}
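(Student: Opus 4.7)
The plan is to reduce Theorem~\ref{Hurvitz.analog} to the classical Hurwitz criterion for polynomials (Theorem~\ref{Theorem.Hurwitz.stable.Hurwitz.matrix.criteria}) by introducing the auxiliary polynomial
\begin{equation*}
\hat p(z)\eqbd(-1)^m h(z)\,g(-z).
\end{equation*}
This $\hat p$ has degree exactly $n=r+m$, positive leading coefficient $b_0c_0$, and its zero set is the union of the zeroes of $h$ and those of $g(-z)$; in particular, $\hat p$ is Hurwitz stable in the sense of Definition~\ref{def.Hurwitz.stable.poly} if and only if both $h(z)$ and $g(-z)$ are, i.e., if and only if $R$ is a Hurwitz rational function. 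It will therefore suffice to establish the identity
\begin{equation*}
\Delta_k(R)=c_0^{-2k}\,\Delta_k(\hat p),\qquad k=1,\ldots,n,
\end{equation*}
after which Theorem~\ref{Theorem.Hurwitz.stable.Hurwitz.matrix.criteria} applied to $\hat p$ immediately yields the equivalence of the Hurwitz property of $R$ with the positivity of $\Delta_1(R),\ldots,\Delta_n(R)$.

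To prove this identity I would start from the manipulation
\begin{equation*}
R(z)=\frac{h(z)\,g(-z)}{g(z)\,g(-z)}=\frac{(-1)^m\,\hat p(z)}{G(z^2)},
\end{equation*}
where $G(u)$ is the polynomial of degree $m$ uniquely determined by $G(z^2)\bdeq g(z)\,g(-z)$; its leading coefficient is $(-1)^m c_0^2$. Expanding
\begin{equation*}
\frac{1}{G(z^2)}=\sum_{l\geqslant 0}g_l\,z^{-2m-2l},\qquad g_0=\frac{(-1)^m}{c_0^2},
\end{equation*}
at infinity, and writing $\hat p(z)=\sum_{k=0}^{n}a_k z^{n-k}$, a direct multiplication of series produces the convolution formula
\begin{equation*}
t_i=(-1)^m\sum_{l\geqslant 0}a_{i-2l}\,g_l,\qquad i=0,1,2,\ldots
\end{equation*}
(with the convention $a_k=0$ for $k<0$ or $k>n$). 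Since the entries of the two Hurwitz matrices are $\mathcal{H}(R)_{i,j}=t_{2j-i}$ and $\mathcal{H}(\hat p)_{i,j}=a_{2j-i}$, this convolution is nothing but the infinite matrix identity
\begin{equation*}
\mathcal{H}(R)=(-1)^m\,\mathcal{H}(\hat p)\,T,
\end{equation*}
where $T$ is the upper-triangular Toeplitz matrix with entries $T_{l,j}=g_{j-l}$ for $j\geqslant l$ and $0$ otherwise.

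From here the identity falls out at once: because $T$ is upper-triangular with constant diagonal entry $g_0$, the top-left $k\times k$ block of the product $\mathcal{H}(\hat p)\,T$ equals the product of the corresponding top-left blocks, so
\begin{equation*}
\Delta_k(R)=(-1)^{mk}\,g_0^k\,\Delta_k(\hat p)=c_0^{-2k}\,\Delta_k(\hat p),
\end{equation*}
as needed. The main obstacle I anticipate is the careful verification of the matrix factorization $\mathcal{H}(R)=(-1)^m\,\mathcal{H}(\hat p)\,T$ together with the multiplicativity of the relevant top-left blocks despite both matrices being infinite; this is essentially formal bookkeeping, but is the one place where the structure of the Hurwitz matrix must be used non-trivially, and it is what makes essential use of the upper-triangularity of $T$. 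Once this matrix identity is in hand, the sign and normalization computation $(-1)^{mk}g_0^k=c_0^{-2k}$ is elementary and the theorem follows by invoking Theorem~\ref{Theorem.Hurwitz.stable.Hurwitz.matrix.criteria} on the polynomial $\hat p$ of degree~$n$.
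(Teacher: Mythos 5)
Your proposal is correct, and while it shares the paper's central reduction --- both introduce the auxiliary polynomial $P(z)=(-1)^m h(z)g(-z)$ of degree $n$ with leading coefficient $b_0c_0>0$ and observe that $R$ is Hurwitz iff $P$ is Hurwitz stable --- the way you connect $\Delta_k(R)$ to the stability of $P$ is genuinely different. The paper passes through the associated function $\Phi=P_1/P_0$, shows that this same $\Phi$ is obtained from the even and odd parts of the Laurent series of $R$, and then invokes the Hankel-minor criterion (Theorem~\ref{Theorem.Hurwitz.stable.poly.and.sign.regularity}) together with the observation that the formul\ae~\eqref{Formulae.Gurwitz.1}--\eqref{Formulae.Gurwitz.2} are formal and hence apply verbatim to the sequence $(t_i)$. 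You instead prove the direct determinantal identity $\Delta_k(P)=c_0^{2k}\Delta_k(R)$ via the factorization $\mathcal{H}(R)=(-1)^m\mathcal{H}(P)\,T$ with $T$ upper-triangular Toeplitz built from the expansion of $1/(g(z)g(-z))$, and then apply the classical Hurwitz criterion (Theorem~\ref{Theorem.Hurwitz.stable.Hurwitz.matrix.criteria}) to $P$; the convolution $t_i=(-1)^m\sum_l a_{i-2l}g_l$, the block-multiplicativity for upper-triangular right factors, and the normalization $(-1)^{mk}g_0^k=c_0^{-2k}$ all check out. Your route is arguably cleaner and more self-contained: it avoids leaning on the somewhat delicate claim that \eqref{Formulae.Gurwitz.1}--\eqref{Formulae.Gurwitz.2} extend formally to non-polynomial series, and it yields as a byproduct the identity $\Delta_j(P)=c_0^{2j}\Delta_j(R)$, which the paper only records afterwards as Remark~\eqref{Hurvitz.dets} (deduced there from the formal formul\ae) and which also immediately explains the vanishing \eqref{zeros} of $\Delta_k(R)$ for $k>n$. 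What the paper's route buys in exchange is the explicit Hankel-minor data $D_j(\Phi)$, $\widehat{D}_j(\Phi)$ attached to $R$ itself, which it reuses elsewhere.
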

\begin{proof} Consider the following auxiliary  polynomial
\begin{equation}\label{rat.func.3}
P(z)\stackrel{def}{=}(-1)^mh(z)g(-z)=a_0z^{n}+a_1z^{n-1}+\ldots+a_{n-1}z+a_n,\quad
a_0=b_0c_0>0.
\end{equation}
By definition, the function $R$ is Hurwitz if and only if the polynomial $P$ is Hurwitz stable.

Consider the rational function $\Phi$ associated with the polynomial $P$:
\begin{equation}\label{rat.func.4}
\Phi(u)=\dfrac{P_1(u)}{P_0(u)}=
s_{-1}+\frac{s_0}u+\frac{s_1}{u^2}+\frac{s_2}{u^3}+\dots,
\end{equation}
where the polynomials $P_0(u)$ ш $P_1(u)$ are the even and odd parts of the polynomial $P$, respectively.
Since $2P_0(z^2)=P(z)+P(-z)$ and $2zP_0(z^2)=P(z)-P(-z)$, we have
\begin{equation*}
z\Phi(z^2)=\dfrac{P(z)-P(-z)}{P(z)+P(-z)}=
\dfrac{h(z)g(-z)-h(-z)g(z)}{h(z)g(-z)+h(-z)g(z)}=\dfrac{R(z)-R(-z)}{R(z)+R(-z)}.
\end{equation*}
Hence we obtain
\begin{equation}\label{rat.func.6}
\Phi(u)=\dfrac{R_1(u)}{R_0(u)}=\dfrac{P_1(u)}{P_0(u)},
\end{equation}
where
\begin{equation*}
R_0(z^2)=\dfrac{R(z)+R(-z)}{2},
\end{equation*}
\begin{equation*}
R_1(z^2)=\dfrac{R(z)-R(-z)}{2z}
\end{equation*}
are the even and odd parts of the function $R$, respectively. Thus, for $n=2l+1$, we have
\begin{equation*}
\Phi(u)=\dfrac{t_0+t_2u^{-1}+t_4u^{-2}+\ldots}{t_1+t_3u^{-1}+t_5u^{-2}+\ldots}=
s_{-1}+\frac{s_0}u+\frac{s_1}{u^2}+\frac{s_2}{u^3}+\dots,
\end{equation*}
and for $n=2l$, we have
\begin{equation*}
\Phi(u)=\dfrac{t_1u^{-1}+t_3u^{-2}+t_5u^{-3}+\ldots}{t_0+t_2u^{-1}+t_4u^{-2}+\ldots}=
s_{-1}+\frac{s_0}u+\frac{s_1}{u^2}+\frac{s_2}{u^3}+\dots
\end{equation*}
We recall that $n=r+m$ is the order of the rational function $R$. Now note that the
formul\ae~\eqref{Formulae.Gurwitz.1}--\eqref{Formulae.Gurwitz.2} are formal,
so following verbatim the proof of the formul\ae~\eqref{Formulae.Gurwitz.1}--\eqref{Formulae.Gurwitz.2}
(see, for instance,~\cite{Tyaglov.general.Hurw,{Holtz_Tyaglov}}) one can establish the following relationships
\begin{itemize}
\item[1)] for $n=2l$,
\begin{equation}\label{Formulae.Gurwitz.rat.1}
\Delta_{2j-1}(R)=t_0^{2j-1}D_j(\Phi),\
\Delta_{2j}(R)=t_0^{2j}\widehat{D}_j(\Phi)\qquad j=1,2,\dots,l,
\end{equation}
\item[2)] for $n=2l+1$,
\begin{equation}\label{Formulae.Gurwitz.rat.2}
\begin{split}
&\Delta_{2j}(R)={\left(\frac{t_0}{s_{-1}}\right)}^{2j}D_j(\Phi),\qquad j=1,2,\dots,l,\\
&\Delta_{2j+1}(R)=(-1)^j{\left(\frac{t_0}{s_{-1}}\right)}^{2j+1}\widehat{D}_j(\Phi),\qquad j=0,1,\dots,l.
\end{split}
\end{equation}
\end{itemize}
where the Hurwitz minors $\Delta_{j}(R)$ are defined as follows
\begin{equation}\label{Gurwitz.minors.rat}
\Delta_{j}(R)\stackrel{def}{=}
\begin{vmatrix}
t_1&t_3&t_5&t_7&\dots&t_{2j-1}\\
t_0&t_2&t_4&t_6&\dots&t_{2j-2}\\
0  &t_1&t_3&t_5&\dots&t_{2j-3}\\
0  &t_0&t_2&t_4&\dots&t_{2j-4}\\
\vdots&\vdots&\vdots&\vdots&\ddots&\vdots\\
0  &0  &0  &0  &\dots&t_{j}
\end{vmatrix}
\end{equation}

By Theorem~\ref{Theorem.Hurwitz.stable.poly.and.sign.regularity}, the polynomial $P$ is Hurwitz
stable if and only if the minors $D_j(\Phi)$ and
$\widehat{D}_j(\Phi)$ satisfy the inequalities~\eqref{Theorem.Hurwitz.stable.poly.and.sign.regularity.condition.0}.
Now since the polynomial $P$ is Hurwitz stable if and only if $R$ is a Hurwitz rational function,
the formul\ae~\eqref{Formulae.Gurwitz.rat.1}--\/\eqref{Formulae.Gurwitz.rat.2} and Theorem~\ref{Theorem.Hurwitz.stable.poly.and.sign.regularity}
imply the assertion of the theorem.
\end{proof}

Note that the leading principal minors $\Delta_j(R)$ of order
more than $n$ equal zero regardless whether the function $R$ being
Hurwitz or not:
\begin{equation}\label{zeros}
\Delta_{n+1}(R)=\Delta_{n+2}(R)=\Delta_{n+3}(R)=\cdots=0.
\end{equation}
It follows from~\eqref{Theorem.Hurwitz.stable.poly.and.sign.regularity.condition.3} and from the
folrmul\ae~\eqref{Formulae.Gurwitz.rat.1}--\/\eqref{Formulae.Gurwitz.rat.2} which are obviously valid
for $j>l$.

\begin{remark}
It is easy to see that $a_0=b_0c_0$ and $t_0=\dfrac{b_0}{c_0}$. So the formul\ae~\eqref{Formulae.Gurwitz.1}--\/\eqref{Formulae.Gurwitz.2} and
the formul\ae~\eqref{Formulae.Gurwitz.rat.1}--\/\eqref{Formulae.Gurwitz.rat.2} imply the equalities
\begin{equation}\label{Hurvitz.dets}
\Delta_j(P)=c_0^{2j}\Delta_j(R),\qquad j=1,2,\dots
\end{equation}
This equalities verify~\eqref{zeros}, since $\Delta_j(P)=0$ for $j>n$.
\end{remark}

Recall now that one of necessary conditions for the polynomial $p$ defined in~\eqref{main.polynomial}
to be Hurwitz stable is the positivity of its coefficients $a_j>0$, $j=0,1,\ldots,n$. This is the so-called
Stodola theorem~\cite{Hurwitz,{Gantmakher}}. It turns out that positivity of the coefficients $t_j$ in~\eqref{rat.func.1} is not necessary condition for
the Hurwitzness of the function $R$. Indeed, if $R=\dfrac{h}{g}$ is Hurwitz, then the polynomial $g$
has all zeroes in the open right half-plane, so it has coefficients of different signs. Therefore, it is easy
to find polynomials $h$ and $g$ such that $R$ has both negative and positive coefficients.
For example, the following function is Hurwitz, but its Laurent series at
infinity has positive, negative and zero coefficients:
\begin{equation*}
F(z)=\frac{z^2+z+1}{z^2-z+1}=t_0+t_1z^{-1}+t_2z^{-2}+t_3z^3+\dots,
\end{equation*}
where
\begin{equation*}
t_0=1\qquad\text{and}\qquad t_{3j-2}=t_{3j-1}=(-1)^{(j-1)}2,\quad  t_{3j}=0\qquad\text{for}\qquad j=1,2,\ldots
\end{equation*}

\vspace{3mm}

Finally, we find a connection between the coefficients of the polynomials $h$ and $g$ and the
Hurwitz determinants $\Delta_j(R)$. This gives us a criterion of Hurwitzness of a real rational function
in terms of the coefficients of its numerator and denominator.

Let again the function $R$ be defined by~\eqref{rat.func.1}--\/\eqref{rat.func.2}. Introduce the following determinants
of order $2j$, $j=1,2,\ldots$:
\begin{equation}\label{nabla}
\Omega_{2j}(h,g)\stackrel{def}{=}
        \begin{vmatrix}
                c_0&c_1&c_2&c_3&c_4&c_5&\dots&c_{j-1}&  c_j  &\dots&c_{2j-3}&c_{2j-2}&c_{2j-1}\\
                b_0&b_1&b_2&b_3&b_4&b_5&\dots&b_{j-1}&  b_j  &\dots&b_{2j-3}&b_{2j-2}&b_{2j-1}\\
                 0 & 0 &c_0&c_1&c_2&c_3&\dots&c_{j-3}&c_{j-2}&\dots&c_{2j-5}&c_{2j-4}&c_{2j-3}\\
                 0 &b_0&b_1&b_2&b_3&b_4&\dots&b_{j-2}&b_{j-1}&\dots&b_{2j-4}&b_{2j-3}&b_{2j-2}\\
                 0 & 0 & 0 & 0 &c_0&c_1&\dots&c_{j-5}&c_{j-4}&\dots&c_{2j-7}&c_{2j-6}&c_{2j-5}\\
                 0 & 0 &b_0&b_1&b_2&b_3&\dots&b_{j-3}&b_{j-2}&\dots&b_{2j-5}&b_{2j-4}&b_{2j-3}\\
                \vdots&\vdots&\vdots&\vdots&\vdots&\vdots&\ddots&\vdots&\vdots&\vdots&\ddots&\vdots&\vdots\\
                 0 & 0 & 0 & 0 & 0 & 0 &\dots&   0   &   0   &\dots&0&   c_0  &   c_1  \\
                 0 & 0 & 0 & 0 & 0 & 0 &\dots&  b_0  &  b_1  &\dots&b_{j-2}& b_{j-1}&  b_{j}
        \end{vmatrix}~.
\end{equation}
Here we set $b_k:=0$ for $k>r$, and $c_j:=0$ for $j>m$.
\begin{lemma}\label{lem.rat.1}
For any $j=1,2,\dots$,
\begin{equation}\label{Omega}
\Omega_{2j}(h,g)=c_0^{2j}\Delta_j(R),
\end{equation}
where $\Delta_j(R)$ are the Hurwitz determinants of the function $R$ defined in~\eqref{Gurwitz.minors.rat}.
\end{lemma}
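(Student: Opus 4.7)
The plan is to expand $\Omega_{2j}(h,g)$ by multilinearity once every row is written in terms of one and the same upper triangular Toeplitz structure built from the coefficients of $g$. Write $\mathbf{c}_\nu$ for the length-$2j$ row $(0,\ldots,0,c_0,c_1,c_2,\ldots)$ having $\nu$ leading zeros, and define $\mathbf{b}_\nu$ analogously from the $b$-sequence. The rows of $\Omega_{2j}(h,g)$, read from top to bottom, are
\begin{equation*}
\mathbf{c}_0,\ \mathbf{b}_0,\ \mathbf{c}_2,\ \mathbf{b}_1,\ \mathbf{c}_4,\ \mathbf{b}_2,\ \ldots,\ \mathbf{c}_{2j-2},\ \mathbf{b}_{j-1}.
\end{equation*}
Since $h(z)=g(z)R(z)$, comparing coefficients of $z^{r-i}$ gives $b_i=\sum_{l\ge 0}c_l\,t_{i-l}$, which entrywise reads $\mathbf{b}_k=\sum_{l\ge 0}t_l\,\mathbf{c}_{k+l}$ for every $k\ge 0$.

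Substituting these expansions into the determinant and using multilinearity in the $j$ $h$-rows yields
\begin{equation*}
\Omega_{2j}(h,g)=\sum_{l_0,\ldots,l_{j-1}\ge 0}\Bigl(\prod_{k=0}^{j-1}t_{l_k}\Bigr)\det\bigl[\mathbf{c}_0,\mathbf{c}_{l_0},\mathbf{c}_2,\mathbf{c}_{1+l_1},\ldots,\mathbf{c}_{2j-2},\mathbf{c}_{(j-1)+l_{j-1}}\bigr].
\end{equation*}
A term can be nonzero only if the $2j$ row indices are distinct and all at most $2j-1$, since $\mathbf{c}_\nu$ vanishes on the first $2j$ columns once $\nu\ge 2j$. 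The even indices $0,2,\ldots,2j-2$ are already present, so the remaining indices $\{k+l_k\}_{k=0}^{j-1}$ must exhaust the set of odd integers $\{1,3,\ldots,2j-1\}$. Writing $k+l_k=2\tau(k)+1$ for a permutation $\tau\in S_j$ (so that $l_k=2\tau(k)+1-k$) parametrises the surviving terms.

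For any such surviving term, reordering the listed rows into the natural order $\mathbf{c}_0,\mathbf{c}_1,\ldots,\mathbf{c}_{2j-1}$ leaves the even positions fixed and permutes the odd positions via $\tau$, contributing a sign $\operatorname{sgn}(\tau)$; the reordered matrix is then the full upper triangular Toeplitz matrix of $g$ with $c_0$ on the diagonal, of determinant $c_0^{2j}$. Collecting,
\begin{equation*}
\Omega_{2j}(h,g)=c_0^{2j}\sum_{\tau\in S_j}\operatorname{sgn}(\tau)\prod_{k=0}^{j-1}t_{2\tau(k)+1-k}=c_0^{2j}\det\bigl[\,t_{2p-i}\,\bigr]_{i,p=1}^{j},
\end{equation*}
the last equality being the Leibniz expansion of the $j\times j$ determinant after relabelling $i=k+1$. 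Since $[t_{2p-i}]_{i,p=1}^j$ is precisely the Hurwitz matrix of $R$ from~\eqref{Gurwitz.minors.rat}, its determinant equals $\Delta_j(R)$, establishing~\eqref{Omega}. The main delicate step is identifying which multi-indices $(l_0,\ldots,l_{j-1})$ survive and computing the sign of the reordering; once those are in place, the identity follows from multilinearity of the determinant and the triangular structure of the coefficient Toeplitz matrix of $g$.
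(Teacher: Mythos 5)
Your proof is correct, and it takes a genuinely different route from the paper's. The paper proceeds by explicit matrix surgery: it first permutes rows and columns of $\Omega_{2j}(h,g)$ (carefully tracking that the total sign change is trivial) to separate the $g$-rows from the $h$-rows and the even-indexed from the odd-indexed columns, then uses the convolution identity $b_k=\sum_{i=0}^{k}c_{k-i}t_i$ to perform row eliminations producing a block determinant $\left|\begin{smallmatrix} C_0 & C_1\\ \widetilde{A} & \widehat{A}\end{smallmatrix}\right|$ with $\widehat{A}=\Lambda_jC_0$, and finally multiplies by $\operatorname{diag}(E_j,C_0^{-1})$ and eliminates the lower-left block to extract $c_0^{2j}|\Lambda_j|$. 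You instead keep the original row order, write each $h$-row as the (effectively finite, after truncation to $2j$ columns) combination $\mathbf{b}_k=\sum_{l\ge 0}t_l\,\mathbf{c}_{k+l}$, and expand by multilinearity; the only surviving terms are those whose shift indices fill out $\{1,3,\dots,2j-1\}$, each contributing $\operatorname{sgn}(\tau)\,c_0^{2j}$ times the corresponding Leibniz monomial of $\Delta_j(R)$. The two arguments rest on the same single input (the coefficient convolution coming from $h=gR$), but yours trades the paper's bookkeeping of row/column transpositions and block elimination for a combinatorial identification of surviving multi-indices and one sign computation; this makes the emergence of the Hurwitz determinant $\det[t_{2p-i}]_{i,p=1}^{j}$ transparent, at the cost of being less mechanical to verify entry by entry. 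One cosmetic remark: when you describe the reordering sign, the rows $\mathbf{c}_0,\mathbf{c}_2,\dots,\mathbf{c}_{2j-2}$ sit in the \emph{odd} positions and are the ones left fixed, while the rows $\mathbf{c}_{2\tau(k)+1}$ occupying the even positions are the ones permuted via $\tau$; your stated conclusion $\operatorname{sgn}(\tau)$ is nevertheless the right one.
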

\begin{proof}
First interchange the rows of the determinant~\eqref{nabla} to obtain
\begin{equation}\label{nabla.prof.1}
\Omega_{2j}(h,g)=
        \begin{vmatrix}
                c_0&c_1&c_2&c_3&\cdots&c_{j-1}&  c_j  &\cdots&c_{2j-2}&c_{2j-1}\\
                 0 & 0 &c_0&c_1&\cdots&c_{j-3}&c_{j-2}&\cdots&c_{2j-4}&c_{2j-3}\\
                \vdots&\vdots&\vdots&\vdots&\ddots&\vdots&\vdots&\ddots&\vdots&\vdots\\
                 0 & 0 & 0 & 0 &\cdots&\qquad   0  \qquad\surd &   0   &\dots&   c_0  &   c_1  \\
                 0 & 0 & 0 & 0 &\cdots&\qquad  b_0 \qquad\surd &  b_1  &\dots& b_{j-1}&   b_j  \\
                \vdots&\vdots&\vdots&\vdots&\ddots&\vdots&\vdots&\ddots&\vdots&\vdots\\
                 0 &b_0&b_1&b_2&\cdots&b_{j-2}&b_{j-1}&\dots&b_{2j-3}&b_{2j-2}\\
                b_0&b_1&b_2&b_3&\cdots&b_{j-1}&  b_j  &\dots&b_{2j-2}&b_{2j-1}
        \end{vmatrix}
\end{equation}
This does not change the sign of~$\Omega_{2j}(h,g)$. In fact, lower the $j$th and $(j+1)$st
rows\footnote{They are marked by $\surd$ in~\eqref{nabla.prof.1}.} to their initial positions
in~\eqref{nabla}. This will require an \textit{even} number of transpositions. The next pair
of rows will then meet, the lowering operation will be applied to them, and so on ($j-1$ times) until
we obtain the initial determinant~\eqref{nabla}.

Now we rearrange the columns of the determinant~\eqref{nabla.prof.1} in the
following way: $1$st, $3$d,\ldots,$(2j-1)$st, $2$nd, $4$th,\ldots, $2j$th.
To move the $3$d column to the second place, we need one transposition.
To move the $5$th column to the third place, we need two transpositions,
and so on. At last, to move the $(2j-1)$st column to the $j$th place,
we need $(j-1)$ transpositions. During all those transpositions the resulting
determinant will change its sign $\sum\limits_{i=1}^{j-1}i={\dfrac{j(j-1)}2}$ times.
Next we interchange the $(j+1)$st row with the $2j$th row, the $(j+2)$nd row with the $(2j-1)$st row,
and so on. This will also require $\dfrac{j(j-1)}2$ transpositions. Thus, the resulting determinant
has the same sign as the initial determinant, so we have
\begin{equation}\label{nabla.prof.2}
\Omega_{2j}(h,g)=
        \begin{vmatrix}
                c_0&c_2&c_4&\cdots&c_{2j-2}&c_1&c_3&c_5&\cdots&c_{2j-1}\\
                 0 &c_0&c_2&\cdots&c_{2j-4}& 0 &c_1&c_3&\cdots&c_{2j-3}\\
                 0 & 0 &c_0&\cdots&c_{2j-6}& 0 & 0 &c_1&\cdots&c_{2j-5}\\
                \vdots&\vdots&\vdots&\ddots&\vdots&\vdots&\vdots&\vdots&\ddots&\vdots\\
                 0 & 0 & 0 &\cdots&   c_0  & 0 & 0 & 0 &\cdots&   c_1  \\
                b_0&b_2&b_4&\cdots&b_{2j-2}&b_1&b_3&b_5&\cdots&b_{2j-1}\\
                 0 &b_1&b_3&\cdots&b_{2j-3}&b_0&b_2&b_4&\cdots&b_{2j-2}\\
                 0 &b_0&b_2&\cdots&b_{2j-4}& 0 &b_1&b_3&\cdots&b_{2j-3}\\
                 0 & 0 &b_1&\cdots&b_{2j-5}& 0 &b_0&b_2&\cdots&b_{2j-4}\\
                \vdots&\vdots&\vdots&\ddots&\vdots&\vdots&\vdots&\vdots&\ddots&\vdots\\
                 0 & 0 & 0 &\cdots&b_{j-1} & 0 & 0 & 0 &\cdots&b_j
        \end{vmatrix}
\end{equation}

Now from each $(j+2i-1)$st row,
$i=1,2,\dots,\left[\dfrac{j+1}2\right]$, we subtract rows $i$th, $(i+1)$st, $\dots$, $j$th multiplied by $t_0$, $t_2$,
$\dots$,$t_{2(j-i)-2}$, respectively. Then, from each $(j+2i)$th row, $i=1,2,\dots,\left[\dfrac j2\right]$, subtract
rows $(i+1)$st, $(i+2)$nd, $\dots$, $j$th multiplied by $t_1$, $t_3$, $\dots$,$t_{2(j-i)-1}$, respectively. As a result,
we have a determinant, which can be represented in a block form:
\begin{equation}\label{nabla.prof.3}
\Omega_{2j}(h,g)=
\begin{vmatrix}
     C_0  &C_1\\
     \widetilde{A}& \widehat{A}
\end{vmatrix}
\end{equation}
where the upper triangular matrices $C_0$ and $C_1$ have the forms
\begin{equation*}
C_0=
\begin{pmatrix}
             c_0&c_2&c_4&\cdots&c_{2j-2}\\
             0 &c_0&c_2&\cdots&c_{2j-4}\\
             0 & 0 &c_0&\cdots&c_{2j-6}\\
             \vdots&\vdots&\vdots&\ddots&\vdots\\
             0 & 0 & 0 &\cdots&   c_0  \\
\end{pmatrix}\,,\quad
C_1=
\begin{pmatrix}
             c_1&c_3&c_5&\cdots&c_{2j-1}\\
              0 &c_1&c_3&\cdots&c_{2j-3}\\
              0 & 0 &c_1&\cdots&c_{2j-5}\\
             \vdots&\vdots&\vdots&\ddots&\vdots\\
              0 & 0 & 0 &\cdots&   c_1  \\
\end{pmatrix}\,.
\end{equation*}
Note that $|C_0|=c_0^j>0$.

In order to describe the matrices $\widetilde{A}$ and $\widehat{A}$, we establish a connection between
the coefficients $b_i$s, $c_i$s and $t_i$s. Multiplying~\eqref{rat.func.1} by the denominator
and equating coefficients, we get
\begin{equation}\label{nabla.prof.4}
b_k=\sum_{i=0}^{k}c_{k-i}t_i,\quad k=0,1,2,\ldots
\end{equation}
Taking into account those formul\ae, we obtain for the entries of the matrix $\widetilde{A}$ to have the form:
\begin{equation}\label{nabla.prof.5}
\begin{aligned}
&\widetilde{a}_{2i-1,\/k}=
\begin{cases}
0\ \qquad\qquad\qquad\qquad\qquad\qquad\qquad\qquad\qquad\qquad\qquad\,\,\text{if}\ k<i+1,\\
\displaystyle
b_{2(k-i)}-\sum_{q=0}^{k-i}c_{2(k-i-q)}t_{2q}=\sum_{q=1}^{k-i}c_{2(k-i-q)+1}t_{2q-1}
\ \qquad\text{if}\ k\geqslant i+1,
\end{cases}\\
&\qquad\qquad\text{where}\ i=1,2,\dots,\text{\footnotesize{$\left[\frac{j+1}{2}\right]$}};\\
\\
&\widetilde{a}_{2i,\/k}=
\begin{cases}
0\ \qquad\qquad\qquad\qquad\qquad\qquad\qquad\qquad\qquad\qquad\qquad\qquad\,\,\text{if}\ k<i+1,\\
\displaystyle
b_{2(k-i)-1}-\sum_{q=1}^{k-i}c_{2(k-i-q)}t_{2q-1}=\sum_{q=0}^{k-i-1}c_{2(k-i-q)-1}t_{2q}
\ \qquad\text{if}\ k\geqslant i+1,
\end{cases}\\
&\qquad\quad\text{where}\
i=1,2,\dots,\text{\footnotesize{$\left[\,\frac{j}{2}\,\right]$}}.
\end{aligned}
\end{equation}
From these formul\ae~it follows that the matrix $\widetilde{A}$ can be represented as a product of
two matrices:
\begin{equation}\label{nabla.prof.6}
\widetilde{A}=
\begin{pmatrix}
              0&t_1&t_3&\cdots&t_{2j-3}\\
              0&t_0&t_2&\cdots&t_{2j-4}\\
              0& 0 &t_1&\cdots&t_{2j-5}\\
             \vdots&\vdots&\vdots&\ddots&\vdots\\
              0& 0 & 0 &\cdots&t_{j-2}\\
\end{pmatrix}
\begin{pmatrix}
             c_1&c_3&c_5&\cdots&c_{2j-1}\\
              0 &c_1&c_3&\cdots&c_{2j-3}\\
              0 & 0 &c_1&\cdots&c_{2j-5}\\
             \vdots&\vdots&\vdots&\ddots&\vdots\\
              0 & 0 & 0 &\cdots&   c_1  \\
\end{pmatrix}
\end{equation}
Analogously, by~\eqref{nabla.prof.4}, for the entries of the matrix $\widehat{A}$, we have
\begin{equation*}
\begin{aligned}
&\widehat{a}_{2i-1,\/k}=
\begin{cases}
0\ \qquad\qquad\qquad\qquad\qquad\qquad\qquad\qquad\qquad\qquad\qquad\quad\,\text{if}\ k<i,\\
\displaystyle
b_{2(k-i)+1}-\sum_{q=0}^{k-i}c_{2(k-i-q)+1}t_{2q}=\sum_{q=0}^{k-i}c_{2(k-i-q)}t_{2q+1}\ \qquad
\text{if}\ k\geqslant i,
\end{cases}\\
&\qquad\qquad\text{where}\ i=1,2,\dots,\text{\footnotesize{$\left[\frac{j+1}{2}\right]$}};\\
\end{aligned}
\end{equation*}
\begin{equation*}
\begin{aligned}
&\widehat{a}_{2i,\/k}=
\begin{cases}
0\ \qquad\qquad\qquad\qquad\qquad\qquad\qquad\qquad\qquad\qquad\qquad\,\,\text{if}\ k<i,\\
c_0t_0\ \qquad\qquad\qquad\qquad\qquad\qquad\qquad\qquad\qquad\qquad\quad\,\text{if}\ k=i,\\
\displaystyle
b_{2(k-i)}-\sum_{q=1}^{k-i}c_{2(k-i-q)+1}t_{2q-1}=\sum_{q=0}^{k-i}c_{2(k-i-q)}t_{2q}\
\qquad\text{if}\ k>i,
\end{cases}\\
&\qquad\quad\text{where}\
i=1,2,\dots,\text{\footnotesize{$\left[\,\frac{j}{2}\,\right]$}}.
\end{aligned}
\end{equation*}
Therefore, the matrix~$\widehat{A}$ can be represented as follows
\begin{equation}\label{nabla.prof.7}
\widehat{A}=
\begin{pmatrix}
              t_1&t_3&t_5&\cdots&t_{2j-1}\\
              t_0&t_2&t_4&\cdots&t_{2j-2}\\
               0 &t_1&t_3&\cdots&t_{2j-3}\\
             \vdots&\vdots&\vdots&\ddots&\vdots\\
               0 & 0 & 0 &\cdots&t_{j}\\
\end{pmatrix}
\begin{pmatrix}
             c_0&c_2&c_4&\cdots&c_{2j-2}\\
              0 &c_0&c_2&\cdots&c_{2j-4}\\
              0 & 0 &c_0&\cdots&c_{2j-6}\\
             \vdots&\vdots&\vdots&\ddots&\vdots\\
              0 & 0 & 0 &\cdots&   c_0  \\
\end{pmatrix}=:\Lambda_jC_0
\end{equation}
It is clear that $\Lambda_j$ is the $j\times j$ leading principal submatrix of the matrix $\mathcal{H}(R)$ defined in~\eqref{Hurwitz.matrix}.
Taking~\eqref{nabla.prof.7} into account, we multiply both parts of the equality~\eqref{nabla.prof.3} by the following
determinant
\begin{equation*}
\begin{vmatrix}
              E_j&0\\
               0 &C_0^{-1}
\end{vmatrix}=c_0^{-j}>0,
\end{equation*}
where $E_j$ is the $j\times j$ identity matrix, to obtain
\begin{equation*}\label{nabla.prof.8}
c_0^{-j}\Omega_{2j}(h,g)=
\begin{vmatrix}
     C_0  &C_1C_0^{-1}\\
     \widetilde{A}&\Lambda_j
\end{vmatrix},
\end{equation*}
or in the entry-wise from:
\begin{equation}\label{nabla.prof.9}
c_0^{-j}\Omega_{2j}(h,g)=
        \begin{vmatrix}
                c_0&c_2&c_4&\cdots&c_{2j-2}&\dfrac{c_1}{c_0}&\cdots&\cdots&\cdots&\cdots\\
                 0 &c_0&c_2&\cdots&c_{2j-4}& 0 &\dfrac{c_1}{c_0}&\cdots&\cdots&\cdots\\
                 0 & 0 &c_0&\cdots&c_{2j-6}& 0 & 0 &\dfrac{c_1}{c_0}&\cdots&\cdots\\
                \vdots&\vdots&\vdots&\ddots&\vdots&\vdots&\vdots&\vdots&\vdots&\vdots\\
                 0 & 0 & 0 &\cdots&   c_0  & 0 & 0 & 0 &\cdots&\dfrac{c_1}{c_0}\\
                 0 &c_1t_1&c_3t_1+c_1t_3&\cdots&\cdots&t_1&t_3&t_5&\cdots&t_{2j-1}\\
                 0 &c_1t_0&c_3t_0+c_1t_2&\cdots&\cdots&t_0&t_2&t_4&\cdots&t_{2j-2}\\
                 0 &   0  &c_1t_1&\cdots&\cdots& 0 &t_1&t_3&\cdots&t_{2j-3}\\
               \vdots&\vdots&\vdots&\ddots&\ddots&\vdots&\vdots&\vdots&\ddots&\vdots\\
                 0 &   0  &   0  &\cdots&\cdots& 0 & 0 & 0 &\cdots&t_{j}
        \end{vmatrix}
\end{equation}
The formul\ae~\eqref{nabla.prof.5}--\/\eqref{nabla.prof.9} show that all the columns of the
matrix~$\widetilde{A}$ are linear combinations of the columns of the
matrix~$\Lambda_j$. Consequently, we are able to eliminate the entries in the lower left corner of
the determinant~\eqref{nabla.prof.9}. To do this, from the $2$nd column we subtract the $(j+1)$st
column multiplied by $c_1$. Then from the $3$т column we subtract the $(j+1)$st column multiplied
by $c_1$ and the $(j+2)$nd column multiplied by $c_3$, and so on. As a result, we obtain the
determinant $c_0^{-j}\Omega_{2j}(h,g)$ to have the following form
\begin{equation*}\label{nabla.prof.10}
c_0^{-j}\Omega_{2j}(h,g)=
        \begin{vmatrix}
                c_0&\cdots&\cdots&\cdots&\cdots&\dfrac{c_1}{c_0}&\cdots&\cdots&\cdots&\cdots\\
                 0 &c_0&\cdots&\cdots&\cdots& 0 &\dfrac{c_1}{c_0}&\cdots&\cdots&\cdots\\
                 0 & 0 &c_0&\hdotsfor{2}& 0 & 0 &\dfrac{c_1}{c_0}&\hdotsfor{2}\\
                \cdots&\cdots&\cdots&\cdots&\cdots&\cdots&\cdots&\cdots&\cdots&\cdots\\
                 0 & 0 & 0 &\cdots&c_0& 0 & 0 & 0 &\cdots&\dfrac{c_1}{c_0}\\
                 0 & 0 & 0 &\cdots& 0 &t_1&t_3&t_5&\cdots&t_{2j-1}\\
                 0 & 0 & 0 &\cdots& 0 &t_0&t_2&t_4&\cdots&t_{2j-2}\\
                 0 & 0 & 0 &\cdots& 0 & 0 &t_1&t_3&\cdots&t_{2j-3}\\
               \cdots&\cdots&\cdots&\cdots&\cdots&\cdots&\cdots&\cdots&\cdots&\cdots\\
                 0 & 0 & 0 &\cdots& 0 & 0 & 0 & 0 &\cdots&t_j\\
        \end{vmatrix}=c_0^j|\Lambda_j|
\end{equation*}

Since $|\Lambda_j|=\Delta_j(R)$, we get
\begin{equation*}
\Omega_{2j}(h,g)=c_0^{2j}\Delta_j(R),
\end{equation*}
as required.
\end{proof}

Now Lemma~\ref{lem.rat.1} and Theorem~\ref{Hurvitz.analog} imply the following
criterion of Hurwitzness of real rational functions in terms of the coefficients
of their numerator and denominator.
\begin{theorem}\label{Hurvitz.analog.2}
A real rational function $R$ defined in~\eqref{rat.func.1}--\eqref{rat.func.2} is Hurwitz
if and only if the inequalities
\begin{equation*}
\Omega_2(h,g)>0,\ \Omega_4(h,g)>0,\dots,\ \Omega_{2n}(h,g)>0
\end{equation*}
hold.
\end{theorem}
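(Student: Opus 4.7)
The plan is a straightforward combination of the two results that have already been established in the paper. By Lemma~\ref{lem.rat.1}, the identity $\Omega_{2j}(h,g)=c_0^{2j}\Delta_j(R)$ holds for every $j=1,2,\ldots,n$. Since the hypothesis on the denominator $g$ in~\eqref{rat.func.2} gives $c_0>0$, the factor $c_0^{2j}$ is strictly positive, and consequently $\Omega_{2j}(h,g)>0$ if and only if $\Delta_j(R)>0$.

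I would then invoke Theorem~\ref{Hurvitz.analog}, which says that the rational function $R$ is Hurwitz if and only if $\Delta_1(R)>0,\Delta_2(R)>0,\ldots,\Delta_n(R)>0$. Chaining the two equivalences yields the desired criterion: $R$ is Hurwitz if and only if $\Omega_2(h,g)>0,\ \Omega_4(h,g)>0,\ldots,\Omega_{2n}(h,g)>0$.

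There is essentially no obstacle at this stage; the entire substance of the theorem has been pushed into Lemma~\ref{lem.rat.1}, whose proof required the careful matrix factorization of $\Omega_{2j}(h,g)$ into the product involving the triangular blocks $C_0$, $C_1$ and the principal submatrix $\Lambda_j$ of the Hurwitz matrix $\mathcal{H}(R)$. Once the identification $\Omega_{2j}(h,g)=c_0^{2j}\Delta_j(R)$ is in hand, the positivity of $c_0^{2j}$ together with Theorem~\ref{Hurvitz.analog} produces the statement verbatim, so the proof can be given in just a few lines.
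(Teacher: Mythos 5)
Your proposal is correct and coincides with the paper's own argument: the paper derives Theorem~\ref{Hurvitz.analog.2} precisely by combining the identity $\Omega_{2j}(h,g)=c_0^{2j}\Delta_j(R)$ from Lemma~\ref{lem.rat.1} (with $c_0>0$) and the positivity criterion of Theorem~\ref{Hurvitz.analog}. Nothing further is needed.
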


From~\eqref{zeros} and~\eqref{Omega} it also follows that
\begin{equation*}
\Omega_{2n+2}(h,g)=\Omega_{2n+4}(h,g)=\Omega_{2n+6}(h,g)=\cdots=0.
\end{equation*}

Lemma~\ref{lem.rat.1} and the formul\ae~\eqref{Hurvitz.dets} imply the following corollary.
\begin{corol}
Let the polynomials $h$ and $g$ be defined in~\eqref{rat.func.2}. Then the Hurwitz minors
of the polynomial $P(z)=(-1)^mh(z)g(-z)$ satisfy the equalities:
\begin{equation*}
\Delta_j(P)=\Omega_{2j}(h,g),\qquad j=1,2\ldots,n,
\end{equation*}
where $n=\deg P=\deg h+\deg g$.
\end{corol}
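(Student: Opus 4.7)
The plan is short, because the corollary is an immediate by-product of two identities already established in this section, and therefore requires essentially no new argument.

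First I would invoke Lemma~\ref{lem.rat.1}, which gives $\Omega_{2j}(h,g)=c_0^{2j}\Delta_j(R)$ for every $j\geqslant 1$. Next I would invoke the relation~\eqref{Hurvitz.dets} recorded in the remark following Theorem~\ref{Hurvitz.analog}: combining the polynomial formul\ae~\eqref{Formulae.Gurwitz.1}--\eqref{Formulae.Gurwitz.2} with the rational-function formul\ae~\eqref{Formulae.Gurwitz.rat.1}--\eqref{Formulae.Gurwitz.rat.2}, together with $a_0=b_0c_0$ and $t_0=b_0/c_0$, one obtains $\Delta_j(P)=c_0^{2j}\Delta_j(R)$ for $j=1,2,\ldots$.

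The last step is just to equate the two expressions and cancel the positive factor $c_0^{2j}$, yielding $\Delta_j(P)=\Omega_{2j}(h,g)$ for every $j\geqslant 1$, and in particular for $j=1,\ldots,n$, which is the claim. As a sanity check one may note that for $j>n$ both sides vanish: $\Delta_j(P)=0$ because $\deg P=n$, and $\Omega_{2j}(h,g)=0$ by~\eqref{zeros} together with~\eqref{Omega}; this confirms consistency beyond the stated range but is not part of the conclusion.

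There is no real obstacle here. All the substantive work — the block decomposition \eqref{nabla.prof.3}, the product representations \eqref{nabla.prof.6} and \eqref{nabla.prof.7}, and the elimination that reduces $\Omega_{2j}(h,g)$ to $c_0^{2j}|\Lambda_j|$ — was already carried out in the proof of Lemma~\ref{lem.rat.1}; the corollary is a one-line consequence of that lemma and~\eqref{Hurvitz.dets}.
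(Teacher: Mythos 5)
Your proposal is correct and matches the paper exactly: the corollary is stated there as an immediate consequence of Lemma~\ref{lem.rat.1} and the identity~\eqref{Hurvitz.dets}, which is precisely the combination you use.
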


Consider now the rational function
\begin{equation}\label{function.F}
F(z)=\dfrac{(-1)^n}{R(-z)},
\end{equation}
where $R$ is defined in~\eqref{rat.func.1}--\/\eqref{rat.func.2}, and $n=r+m$ is order of $R$.

It is clear that the function $F$ is Hurwitz if and only if the function $R$ is Hurwitz. This fact can be verified by
the following relationship between the Hurwitz minors of the functions $F$ and $R$.
\begin{theorem}
Let the function $R$ be defined in~\eqref{rat.func.1}--\/\eqref{rat.func.2}, and let the function $F$ be
defined in~\eqref{function.F}. Then
\begin{equation}\label{Deltas}
\Delta_j(R)=t_0^{2j}\Delta_j(F),\qquad j=1,2,\ldots
\end{equation}
\end{theorem}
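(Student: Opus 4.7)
The plan is to reduce~\eqref{Deltas} to the observation that $R$ and $F$ share the same associated function $\Phi$. First I would record that $F(z)R(-z) = (-1)^n$ by the very definition~\eqref{function.F}. From this identity,
\begin{equation*}
F(z)\pm F(-z) = (-1)^n\cdot\frac{R(z)\pm R(-z)}{R(z)R(-z)},
\end{equation*}
so the common factor $(-1)^n/[R(z)R(-z)]$ cancels in the quotient to give
\begin{equation*}
z\Phi_F(z^2) = \frac{F(z)-F(-z)}{F(z)+F(-z)} = \frac{R(z)-R(-z)}{R(z)+R(-z)} = z\Phi_R(z^2),
\end{equation*}
where the last equality was derived inside the proof of Theorem~\ref{Hurvitz.analog}. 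Hence the associated functions coincide, $\Phi_F\equiv\Phi_R=:\Phi$, which implies $D_j(\Phi_F)=D_j(\Phi)$ and $\widehat{D}_j(\Phi_F)=\widehat{D}_j(\Phi)$ for every $j$, and also that the free term $s_{-1}$ of the Laurent expansion~\eqref{app.assoc.function.series} is the same for $R$ and for $F$.

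Next I would compare the leading Laurent coefficients. Writing $R(z)=t_0 z^{r-m}+\cdots$ and $F(z)=\tau_0 z^{m-r}+\cdots$, and using $(-1)^{r-m}=(-1)^{r+m}=(-1)^n$, the relation $F(z)R(-z)=(-1)^n$ at leading order gives $\tau_0 t_0=1$. Since the orders of $R$ and $F$ are both equal to $n=r+m$, formulas~\eqref{Formulae.Gurwitz.rat.1}--\eqref{Formulae.Gurwitz.rat.2} apply to $R$ and to $F$ with the \emph{same} parity case, the \emph{same} $\Phi$, and the \emph{same} $s_{-1}$. Forming the ratio $\Delta_j(R)/\Delta_j(F)$ for each $j=1,2,\dots$, the Hankel determinants and any $s_{-1}$ factors cancel, so only $(t_0/\tau_0)^j=t_0^{2j}$ survives. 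This is exactly~\eqref{Deltas}.

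I expect no serious obstacle: the identity $\Phi_F=\Phi_R$ is a one-line manipulation, and the remainder is mechanical parity-by-parity bookkeeping using the formulas already established in the proof of Theorem~\ref{Hurvitz.analog}. The only minor point worth noting is that~\eqref{Deltas} is claimed for every $j\ge 1$, but for $j>n$ both sides vanish by~\eqref{zeros} applied to $R$ and to $F$, so the identity holds trivially beyond that range.
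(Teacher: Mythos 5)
Your argument is correct, but it takes a genuinely different route from the paper's. The paper writes $F=f/q$ with $f(z)=(-1)^m g(-z)$ and $q(z)=(-1)^r h(-z)$, forms the auxiliary polynomial $Q(z)=(-1)^r q(-z)f(z)$ analogous to~\eqref{rat.func.3}, verifies in one line that $Q=P$, and then applies the already-established relation $\Delta_j(P)=c_0^{2j}\Delta_j(R)$ of~\eqref{Hurvitz.dets} to both pairs $(P,R)$ and $(Q,F)$, getting $c_0^{2j}\Delta_j(R)=b_0^{2j}\Delta_j(F)$ and hence~\eqref{Deltas} from $t_0=b_0/c_0$. You instead stay entirely at the level of the rational functions: from $F(z)R(-z)=(-1)^n$ you deduce $\Phi_F=\Phi_R$ and $\tau_0 t_0=1$, and then compare~\eqref{Formulae.Gurwitz.rat.1}--\eqref{Formulae.Gurwitz.rat.2} for $R$ and for $F$. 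Both proofs ultimately rest on the same fact --- $R$ and $F$ (equivalently $P$ and $Q$) have the same associated function $\Phi$ --- but yours re-derives this directly, while the paper's piggybacks on~\eqref{Hurvitz.dets} and is therefore shorter; yours has the advantage of being self-contained and of making the mechanism (common $\Phi$, reciprocal leading coefficients) explicit. One presentational caveat: ``forming the ratio $\Delta_j(R)/\Delta_j(F)$'' tacitly assumes $\Delta_j(F)\neq0$; it is cleaner to say that both sides of~\eqref{Deltas} are the same explicit multiple of the same minor $D_j(\Phi)$ or $\widehat{D}_j(\Phi)$ (with the same $s_{-1}$), so the identity holds even where these vanish. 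Your exponent bookkeeping is right, since in every line of~\eqref{Formulae.Gurwitz.rat.1}--\eqref{Formulae.Gurwitz.rat.2} the power of $t_0$ (resp.\ $\tau_0$) equals the order of the minor, giving the factor $(t_0/\tau_0)^j=t_0^{2j}$ for the minor of order $j$.
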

\begin{proof}
Let $F(z)=\dfrac{f(z)}{q(z)}$, where $f(z)=(-1)^mg(-z)$, and $q(z)=(-1)^rh(-z)$. Consider the polynomial $Q(z)=(-1)^rq(-z)f(z)$, which is analogous to~\eqref{rat.func.3}. We have
\begin{equation*}
Q(z)=(-1)^rq(-z)f(z)=(-1)^r(-1)^rh(z)(-1)^{m}g(-z)=(-1)^mh(z)g(-z)=P(z).
\end{equation*}
Now the formul\ae~\eqref{Hurvitz.dets} imply
\begin{equation*}
c_0^{2j}\Delta_j(R)=\Delta_j(P)=\Delta_j(Q)=b_0^{2j}\Delta_j(F),\qquad j=1,2,\ldots,
\end{equation*}
that is exactly~\eqref{Deltas}, since $t_0=\dfrac{b_0}{c_0}$.
\end{proof}

\begin{corol}
Let the polynomials $h$ and $g$ are defined in~\eqref{rat.func.2}. Then
\begin{equation*}
\Omega_{2j}(h,g)=\Omega_{2j}(f,q),\qquad j=1,2,\ldots,
\end{equation*}
where $f(z)=(-1)^mg(-z)$, and $q(z)=(-1)^rh(-z)$.
\end{corol}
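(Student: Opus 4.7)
The plan is to chain together Lemma~\ref{lem.rat.1} and the relationship~\eqref{Deltas} from the preceding theorem, applied to both $R=h/g$ and $F=f/q$. Since the corollary is about $\Omega_{2j}$, the natural route is to express both $\Omega_{2j}(h,g)$ and $\Omega_{2j}(f,q)$ in terms of Hurwitz determinants of a common rational function.

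First, I would apply Lemma~\ref{lem.rat.1} to the function $R=h/g$ to get $\Omega_{2j}(h,g)=c_0^{2j}\Delta_j(R)$. Next, I would invoke the identity~\eqref{Deltas}, which says $\Delta_j(R)=t_0^{2j}\Delta_j(F)$ with $t_0=b_0/c_0$. Substituting, the factors $c_0^{2j}$ cancel cleanly and I obtain
\begin{equation*}
\Omega_{2j}(h,g)=c_0^{2j}\left(\dfrac{b_0}{c_0}\right)^{2j}\Delta_j(F)=b_0^{2j}\Delta_j(F).
\end{equation*}

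The only point that needs slight care is identifying the leading coefficient of the denominator $q$ of $F$, because Lemma~\ref{lem.rat.1} is stated in terms of this leading coefficient. Since $h$ has degree $r$ with leading coefficient $b_0>0$, the polynomial $q(z)=(-1)^r h(-z)$ has leading coefficient $(-1)^r\cdot(-1)^r b_0=b_0>0$. So $F=f/q$ fits the form~\eqref{rat.func.1}--\eqref{rat.func.2} with denominator-leading-coefficient $b_0$, and Lemma~\ref{lem.rat.1} applied to this representation gives $\Omega_{2j}(f,q)=b_0^{2j}\Delta_j(F)$.

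Comparing the two expressions yields $\Omega_{2j}(h,g)=\Omega_{2j}(f,q)$ immediately, for every $j=1,2,\ldots$. There is no real obstacle here; the argument is a one-line corollary once one is careful to note that both numerator and denominator of $F$ are genuine polynomials with positive leading coefficients (of the correct degrees $m$ and $r$ respectively), so that Lemma~\ref{lem.rat.1} is legitimately applicable to $F$ in the same formal manner as to $R$.
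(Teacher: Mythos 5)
Your proof is correct and follows exactly the route the paper intends: the corollary is stated as an immediate consequence of Lemma~\ref{lem.rat.1} applied to both $R=h/g$ and $F=f/q$ together with the identity $\Delta_j(R)=t_0^{2j}\Delta_j(F)$, and your verification that $q(z)=(-1)^r h(-z)$ and $f(z)=(-1)^m g(-z)$ have positive leading coefficients $b_0$ and $c_0$ is precisely the small check needed to apply the lemma to $F$.
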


Finally, note that if one of the polynomials $h$ and $g$ is a constant, the determinants $\Omega_{2j}(h,g)$ become Hurwitz determinants
(up to a constant factor) of the polynomial, which is not a constant. This follows from the formul\ae~\eqref{Hurvitz.dets}.

\section*{Acknowledgments}
The authors are grateful to Professor Olga Holtz for helpful discussions.


\begin{thebibliography}{99}
%
%
%
\bibitem{Barkovsky.2}
{\rm Yu.S.\,Barkovskiy\/}, \textit{Lectures on the Routh-Hurwitz
problem}, to be published
%
%
\bibitem{Frobenius}
{\rm G.\,Frobenius\/}, \textit{\"Uber das Tr\"agheitsgesetz der
quadratischen Formen}// Sitz.-Ber. Acad. Wiss. Phys.-Math. Klasse,
Berlin, 1894, 241--256; 407--431.
%
%
\bibitem{Gantmakher}
{\rm F.R.\,Gantmacher\/}, The Theory of Matrices, Vol.\,II. AMS
Chelsea Publ., 2000.
%
%
%
\bibitem{Holtz_Tyaglov}
{\rm O.\,Holtz and M.\,Tyaglov.\/} Structured matrices, continued
fractions, and root localization of polynomials. \textit{arXiv:
0912.4703} (2009).
%
\bibitem{Hurwitz}
{\rm A.\,Hurwitz\/}, \textit{\"Uber die Bedingungen, unter welchen
eine Gleichung nur Wurzeln mit negativen reelen Teilen besitzt},
Math.Ann, 1895, 46, 273--284.
%
\bibitem{KreinNaimark}
{\rm M.G.\, Krein and M.A.\,Naimark\/}, \textit{The method of
symmetric and Hermitian forms in the theory of the separation of
the roots of algebraic equations}, Linear and Multilinear Algebra,
1981, 10, 265--308.
%
%
%
%
\bibitem{Tyaglov.general.Hurw}
{\rm M.\,Tyaglov.\/} Generalized Hurwitz polynomials. \textit{arXiv:
1005.3032} (2010).
\end{thebibliography}
\end{document}